\newtheorem{thm}{Theorem}[section]
\newtheorem{co}{Collorary}[section]
\newtheorem{re}{Remark}[section]
\newcommand{\go}[1]{\mathfrak{#1}}
\newcommand{\R}{{\rm I}\kern-0.18em{\rm R}}
\newcommand{\1}{{\rm 1}\kern-0.25em{\rm I}}
\newcommand{\E}{{\rm I}\kern-0.18em{\rm E}}
\newcommand{\p}{{\rm I}\kern-0.18em{\rm P}}
\def\@fnsymbol#1{\ensuremath{\ifcase#1\or a\or b\or c\or d\or \e\or f\or *\dagger 	\or \ddagger\ddagger 
		else\@ctrerr\fi}}
\title{On an arithmetical property of moments and cumulants}
\author{Ashot V. Kakosyan\thanks{Faculty of Economics and Management, Department of Mathematical  Modeling in EconomicsбYerevan State University, Yerevan, Armenia; e-mail: ashotkakosyan@ysu.am} and Lev B. Klebanov\thanks{Department of Probability and Mathematical Statistics, Charles University, Prague, 186 75, Czech Republic; e-mail (address for communications): klebanov@karlin.mff.cuni.cz }}
\date{}
\begin{document}
\maketitle
\begin{abstract}
The main result of the paper is the following.
	Let a non-degenerate distribution have finite moments $\mu_k$ of all orders $k=0,1,2,\ldots$. Then the sequence $\{\mu_k/k!, \; k=0,1,2,\ldots\}$ either contains infinitely many different terms or at most three. In the latter case, this sequence has the form $\{1,a,1-b,a,1-b,a,1-b, \ldots\}$ and corresponds to a distribution with the characteristic function \begin{equation*}\label{ eq0}
		f(t)=\frac{1+iat+bt^2}{1+t^2}, \quad \text{where} \;\; b\geq 0,\; \frac{1-a-b}{2}\geq 0,\; \frac{1+a-b}{2}\geq 0.
	\end{equation*}.

{\bf Key words:} classical problem of moments; exponential distribution; Laplace distribution; cumulants; Marcinkiewicz theorem; analytical continuation.

{\bf 2020 Mathematics Subject Classification}: Primary 44A60, 30E05, 30B40; Secondary 60E05, 60E10. 
\end{abstract}

\section{Introduction}\label{sec1}
\setcounter{equation}{0}

The problems studied in this work are related to two observations. The first consists of considering the moments of the exponential distribution and Laplace distributions' moments. The second relates to the well-known Marcinkiewicz theorem (see, for example, \cite{Lin}). Let us explain this in more detail.

\begin{enumerate}
\item[A.] Consider the exponential distribution on the positive semi-axis with a 
unit scale parameter. Its moment $\mu_r$ of order $r$ is equal to $r!$, i.e. 
ratio $\mu_r/r! =1$ does not depend on the order of the moment. In other words, the set 
of different values of the fractions $\{\mu_r/r!,\;r=0,1,2,\ldots \}$ is 
finite (consists of one element). A similar situation occurs in the case of 
the Laplace distribution with zero mean and variance equal to 2. In this case,
$\mu_r/r! =(1+(-1)^r)/2$ and the set of different values of fractions 
$\{\mu_r/r!,\;r=0,1,2,\ldots \}$ is again finite (consists of two elements ). In 
this work, we will describe all probability distributions for which the set $\go M$ 
of different values of the fractions $\{\mu_r/r!,\;r=0,1,2,\ldots \}$ is finite.

\item[B.] Marcinkiewicz's well-known theorem states that if the cumulants of 
the $\{\varkappa_k\}$ distribution are equal to zero starting from a certain place, 
then this distribution is Gaussian. In this case, the set $\go K$ of different values 
of fractions $\{\varkappa_k/k!,\; k=0,1,2,\ldots\}$ of course. However, it is not 
yet clear whether an analog of Marcinkiewicz's theorem will be valid if the condition 
for the finiteness of the set of nonzero cumulants is replaced by the requirement for 
the finiteness of the set $\go K$. We will also provide an answer to this question below.
\end{enumerate}
If $f(t)$ is an analytic characteristic function, then its power series expansion has the form
\begin{equation}\label{eq*}
f(t) = \sum_{k=0}^{\infty}i^k \frac{\mu_k}{k!} t^k.
\end{equation}
From this, we see that if the set $\go M$ is finite, then the set of different coefficients of the series (\ref{eq*}) is finite and, conversely, if the set of different coefficients of the series (\ref{eq*}) is finite, then set $\go M$.
A similar connection exists between the set $\go K$ and the set of different coefficients 
of the power series expansion of $\log f(t)$.
The main result of the work for the case of moments is as follows:

\begin{thm}\label{th0}
Let a non-degenerate distribution have finite moments $\mu_k$ of all orders $k=0,1,2,\ldots$. Then the sequence $\{\mu_k/k!, \; k=0,1,2,\ldots\}$ either contains infinitely many different terms or at most three. In the latter case, this sequence has the form $\{1,a,1-b,a,1-b,a,1-b, \ldots\}$ and corresponds to a distribution with the characteristic function \begin{equation*}\label{ eq0}
f(t)=\frac{1+iat+bt^2}{1+t^2}, \quad \text{where} \;\; b\geq 0,\; \frac{1-a-b}{2}\geq 0,\; \frac{1+a-b}{2}\geq 0.
\end{equation*}.
\end{thm}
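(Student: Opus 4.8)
The plan is to work with the normalized coefficients $c_k=\mu_k/k!$ and the generating function $g(z)=\sum_{k\ge 0}c_k z^{k}$, so that by \eqref{eq*} the characteristic function is $f(t)=g(it)$. Assume $\go M=\{c_k\}$ is finite (otherwise the sequence has infinitely many distinct terms and there is nothing to prove). Then the $c_k$ are real and take only finitely many values; since the distribution is non-degenerate, $g$ cannot be a polynomial (a polynomial $f$ would be unbounded on $\mathbb{R}$), so infinitely many $c_k$ are nonzero and, the nonzero values being bounded away from $0$, one gets $\limsup_k|c_k|^{1/k}=1$. Hence $g$ has radius of convergence exactly $1$ and $f$ is an analytic characteristic function, holomorphic in a maximal horizontal strip $-\alpha<\operatorname{Im}t<\beta$ with $\min(\alpha,\beta)=1$, so that $\alpha,\beta\ge 1$. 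The decisive input is Szeg\H{o}'s theorem: a power series whose coefficients assume only finitely many values is either a rational function of the form $P(z)/(1-z^{N})$ (equivalently, has eventually periodic coefficients) or admits the unit circle $|z|=1$ as a natural boundary.

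First I would rule out the natural boundary. Under the substitution $z=it$ the horizontal strip of $f$ becomes the vertical strip $|\operatorname{Re}z|<1$ in which $g$ is holomorphic; in particular the imaginary $z$-axis corresponds to the real $t$-axis, which lies in the \emph{interior} of the strip. Thus $z=i$, although it sits on $|z|=1$, corresponds to the regular point $t=1$ and is therefore a regular point of $g$, contradicting the natural-boundary alternative. Hence $g$ is rational; its poles are roots of $1-z^{N}$, i.e.\ lie among the $N$th roots of unity, and boundedness of $(c_k)$ forces them to be simple.

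Next I would locate the poles precisely. No singularity of $g$ may lie strictly inside the strip $|\operatorname{Re}z|<1$, so every pole $\omega$ satisfies $|\operatorname{Re}\omega|\ge 1$; but a root of unity with $|\operatorname{Re}\omega|\ge 1$ must be $\omega=\pm 1$. Therefore, after the numerator cancels the remaining cyclotomic factors, $g$ reduces to $g(z)=\dfrac{1+az-bz^{2}}{1-z^{2}}$, where the requirement that $f(t)=g(it)$ remain bounded on $\mathbb{R}$ caps the numerator degree at $2$ and $g(0)=c_0=1$ fixes the constant term. Expanding the geometric series gives $c_0=1$, $c_{2m+1}=a$, and $c_{2m}=1-b$ for $m\ge 1$, i.e.\ the sequence $\{1,a,1-b,a,1-b,\dots\}$ with at most three distinct values, and $z=it$ yields $f(t)=\dfrac{1+iat+bt^{2}}{1+t^{2}}$. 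I expect this pole-localization step to be the main obstacle, since it is precisely here that the characteristic-function structure (a \emph{horizontal} strip with no interior singularities) must be combined with Szeg\H{o}'s rigidity to exclude every period other than $2$.

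Finally I would pin down the admissible $(a,b)$ by Fourier inversion. Writing $f(t)=b+(1-b)\dfrac{1}{1+t^{2}}+ia\dfrac{t}{1+t^{2}}$ and recalling that $1/(1+t^{2})$ and $it/(1+t^{2})$ are the characteristic function and the transform of $\tfrac12 e^{-|x|}$ and $\tfrac12\operatorname{sgn}(x)e^{-|x|}$ respectively, one identifies the underlying measure as
\[
dF(x)=b\,\delta_0(dx)+\tfrac12 e^{-|x|}\bigl[(1-b)+a\,\operatorname{sgn}(x)\bigr]\,dx.
\]
Its total mass equals $1$ automatically, and nonnegativity of the atom and of the two density branches gives exactly $b\ge 0$, $\tfrac{1-a-b}{2}\ge 0$, and $\tfrac{1+a-b}{2}\ge 0$. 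Conversely, for such $(a,b)$ this $dF$ is a genuine probability measure whose analytic characteristic function is $f$, so that its moments realize the prescribed sequence; this establishes both the classification and the stated constraints.
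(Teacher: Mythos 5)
Your proposal is correct and follows essentially the same route as the paper's own proof of Theorem~\ref{th1} and Corollary~\ref{co1}: Szeg\H{o}'s dichotomy, the strip of regularity of an analytic characteristic function to exclude the natural boundary and to confine the poles to $\pm i$ (your $z=\pm1$ after the substitution $z=it$), boundedness of $|f|$ on $\R^1$ to cap the numerator degree at $2$, and the identification of the law as a mixture of $\delta_0$ with two one-sided exponentials to obtain the constraints on $a,b$. The only (welcome) refinement is that you justify explicitly why finiteness of $\go M$ plus non-degeneracy forces the radius of convergence to be exactly $1$, a point the paper passes over in one line.
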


There is no such result for the case of cumulants.

\section{Property of moments}\label{sec2}
\setcounter{equation}{0}
Let us first consider nondegenerate random variables for which the set $\go M$ is finite.

It will be shown below that the characteristic functions of such random variables have the form of either
\begin{equation}\label{eq1}
f(t)=\frac{1\pm iat}{1\pm it},\quad \text{where} \; 0\leq a \leq 1,
\end{equation}
or
\begin{equation}\label{eq2}
f(t)=\frac{1+iat+bt^2}{1+t^2}, \quad \text{where} \;\; b\geq 0,\; \frac{1-a-b}{2}\geq 0,\; \frac{1+a-b}{2}\geq 0.
\end{equation}
Note that the function (\ref{eq1}) is a special case of (\ref{eq2}), but it is somewhat more convenient for us to consider these 
cases separately.

In addition to the question of moments, we study a similar question in the case of replacing the characteristic
functions by its logarithm.

\begin{thm}\label{th1}
Let $f(t)$ be the characteristic function of some non-degenerate random variable. Suppose that $f(t)$ is analytic in the disk \\
$|t|<r$, $r>0$ and
\begin{equation}\label{eq3}
f(t)=\sum_{k=0}^{\infty}a_k t^k.
\end{equation}
Let us also assume that the set $\go M$ is finite. Then $f(t)$ has the form (\ref{eq1}) or (\ref{eq2}), where $a,b$ are some constants. Conversely, if $f(t)$ has the form (\ref{eq1}) or (\ref{eq2}), then the set $\go M$ is finite.
\end{thm}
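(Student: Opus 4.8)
\emph{Sketch of proof.} The plan is to translate the finiteness of $\go M$ into a statement about the Taylor coefficients and then invoke a classical coefficient theorem. By (\ref{eq*}) the coefficients in (\ref{eq3}) are $a_k=i^k\mu_k/k!$, so the hypothesis that $\go M$ is finite says precisely that the real numbers $c_k:=\mu_k/k!$ take only finitely many distinct values. First I would introduce $g(s):=f(-is)=\sum_{k\ge 0}c_k s^k$; since $f$ is an analytic characteristic function, $g(s)=\E\,e^{sX}$ is the moment generating function of the underlying variable $X$, analytic in a vertical strip $s_-<\Re s<s_+$ with $s_-<0<s_+$ (the standard strip of analyticity for analytic characteristic functions). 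Because $\{c_k\}$ is bounded, the radius of convergence of $\sum c_k s^k$ is at least $1$; it cannot exceed $1$ in the non-degenerate case, for otherwise $c_k\to 0$, and a sequence with finitely many values tending to $0$ is eventually $0$, making $g$ a polynomial and $f$ a necessarily constant polynomial, i.e. $X$ degenerate. Hence the radius equals $1$.

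Next I would apply the theorem of Szeg\H{o}: a power series whose coefficients assume only finitely many distinct values is either a rational function of the form $P(s)/(1-s^{N})$ — equivalently, its coefficient sequence is eventually periodic — or else has the unit circle as a natural boundary. The natural-boundary alternative is excluded here because $g$ is analytic on the whole imaginary axis $\Re s=0$, which lies inside its strip of analyticity, so the unit circle contains regular points of $g$. Therefore $g(s)=P(s)/(1-s^{N})$ for some polynomial $P$ and integer $N\ge 1$, all poles of $g$ lie among the $N$-th roots of unity, and $\{c_k\}$ is eventually periodic with period dividing $N$.

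The decisive step — and the one I expect to be the main obstacle — is to show that the only admissible poles are the real ones $s=\pm 1$, so that after reduction the denominator is $1-s^{2}$. Analyticity of the MGF on the imaginary axis already rules out poles at $\pm i$. To eliminate the remaining complex roots of unity I would use positivity of the underlying measure: writing $f(t)=g(it)$ as a bounded rational characteristic function (so $\deg P\le N$, with any excess constant producing an atom at $0$), the density of the absolutely continuous part is recovered by residues as a finite sum $\sum_j R_j e^{-i\tau_j x}$ over the poles $\tau_j=-i\omega_j$ of $f$, where $\omega_j$ runs through the poles of $g$. A pole at a non-real root of unity $\omega=e^{i\theta}$, together with its conjugate, contributes an oscillatory term proportional to $e^{(\Re\omega)x}\cos(\Im\omega\,x+\varphi)$; since all poles lie at distance $1$ from the origin, the slowest-decaying contribution to each tail comes from the pole nearest the imaginary axis on the appropriate side, and if that pole is non-real the corresponding tail of the density oscillates in sign, contradicting $p(x)\ge 0$. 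Carrying this out on both tails forces every pole to be real, hence $s=\pm 1$ only.

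Once the denominator is $1-s^{2}$ and $\deg P\le 2$ with $P(0)=g(0)=1$, I would write $g(s)=(1+as-bs^{2})/(1-s^{2})$ and perform the partial-fraction decomposition
\begin{equation*}
g(s)=b+\frac{(1+a-b)/2}{1-s}+\frac{(1-a-b)/2}{1+s},
\end{equation*}
which exhibits $X$ as a mixture of a point mass at $0$ and the two one-sided unit exponentials (with moment generating functions $1/(1\mp s)$). Requiring this to be a genuine probability measure is exactly the requirement that the three weights be non-negative, i.e. $b\ge 0$, $(1+a-b)/2\ge 0$, $(1-a-b)/2\ge 0$; they automatically sum to $1$. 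Substituting $s=it$ returns $f(t)=(1+iat+bt^{2})/(1+t^{2})$, which is (\ref{eq2}), while the degenerate case in which one exponential weight vanishes collapses to (\ref{eq1}); reading off the coefficients of $g$ gives the pattern $c_k=1,a,1-b,a,1-b,\dots$, so $\go M\subseteq\{1,a,1-b\}$ has at most three elements. The converse is the routine computation reversed: for $f$ of the form (\ref{eq1}) or (\ref{eq2}) the expansion of $g(s)=f(-is)$ produces exactly the eventually $2$-periodic coefficient sequence above, so $\go M$ is finite.
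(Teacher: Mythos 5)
Your proposal is correct in outline and follows the same skeleton as the paper's proof: translate finiteness of $\go M$ into finiteness of the set of distinct Taylor coefficients, invoke Szeg\H{o}'s theorem, exclude the natural-boundary alternative because an analytic characteristic function continues beyond the unit circle, bound the degree of the numerator by $|f(t)|\le 1$, and finish with the partial-fraction/mixture decomposition into a point mass and two one-sided exponentials, whose non-negative weights are exactly the constraints in (\ref{eq2}). The one genuinely different step is the one you flag as the main obstacle: eliminating the non-real roots of unity as poles. You do this by Fourier inversion and a tail-positivity argument (a dominant non-real conjugate pair of poles forces a damped-cosine tail that changes sign). That can be made rigorous — the poles of $P(s)/(1-s^N)$ in lowest terms are simple and no two non-conjugate roots of unity share a real part, so the leading tail term really is a single oscillating exponential — but as sketched it leaves loose ends (vanishing residues, a side with no poles, separating the absolutely continuous part) and it is much heavier than necessary. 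The paper instead uses a fact you already cite: an analytic characteristic function regular in $|t|<\rho$ is regular in the whole strip $|\mathrm{Im}\,t|<\rho$; equivalently your $g$ is regular in $|\mathrm{Re}\,s|<\rho$, and here $\rho\ge 1$ since the coefficients are bounded. Every $N$-th root of unity except $s=\pm 1$ (i.e.\ $t=\pm i$) satisfies $|\mathrm{Re}\,s|<1$ and so lies in the open strip of regularity; hence only $s=\pm1$ can be singular, and your entire residue argument collapses to one line. The remainder of your argument — the normalization $P(0)=1$, the Hermitian symmetry forcing $a,b\in\R$, the identification $\go M=\{1,a,1-b\}$, and the converse — matches the paper.
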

\begin{proof} As noted above, the finiteness of the set $\go M$ is equivalent to the finiteness of the set of different coefficients of the series (\ref{eq3}).
G. Szeg\"{o}'s theorem (\cite{Sz}, see also \cite{Bie}) states that an analytic function of the form (\ref{eq3}) is either non-extendable beyond the unit circle or can be written in the form
\begin{equation}\label{eq4}
f(t)=\frac{P(t)}{1-t^m},
\end{equation}
where $P(t)$ is a polynomial and $m$ is a non-negative integer. Note that the fraction on the right side (\ref{eq4}) is generally reducible.

Since $f(t)$ is an analytic characteristic function, it can be extended from the circle $|t|<r$ to the strip $|\tt{Im} \;t| < r$ (see \cite{Lin}) and the first alternative in G. Szeg\"{o}'s theorem is impossible. Consequently, $f(t)$ must have the form (\ref{eq4}), and, therefore, its possible singularities must be at the $m$th roots of unity. Thus, we can assume that $r=1$ and $f(t)$ is analytic in the strip $|\tt{Im} \;t| <$1. The latter is only possible if:
\begin{enumerate}
\item $f(t)$ has no singularities at all;
\item singularities $f(t)$ lie at one of the points $\pm i$;
\item singularities of $f(t)$ lie at both points $\pm i$.
\end{enumerate}
So, or
\begin{equation}\label{eqA}
f(t)=P(t),
\end{equation}
or
\begin{equation}\label{eq5}
f(t)=\frac{P(t)}{1\pm it}
\end{equation}
or finally
\begin{equation}\label{eq6}
f(t)=\frac{P(t)}{1+t^2},
\end{equation}
where $P(t)$ is some polynomial.
However, for any characteristic function $f(t)$, $|f(t)|\leq 1$ holds for all real $t$. Therefore, the degree of the polynomial $P(t)$ is equal to zero in the case of (\ref{eqA}), not higher than 1 in the case of (\ref{eq5}) and not higher than 2 in the case of (\ref{eq6}).
However, if the degree of the polynomial $P(t)$ in (\ref{eqA}) is equal to zero, then the corresponding distribution is degenerate, contrary to the assumption. Therefore, the case (\ref{eqA}) is impossible.

{\bf I) Let us first consider the case (\ref{eq5})}.
Since in this case the degree of the polynomial $P(t)$ is not higher than 1, then
\[ f(t) = \frac{A\pm Bt}{1 \pm i t}.\]
To clarify the constants, we will use the simplest properties of characteristic functions. If $f(t)$ is an arbitrary characteristic function, then $f(0)=1$. Therefore $A=1$. Next, $f(-t)=\overline{f(t)}$, where the overbar means the transition to a complex conjugate number. For $t\in \R^1$ we have
\[ f(-t) = \frac{1\mp Bt}{1\mp it}, \]
\[ \overline{f(t)} = \frac{1\pm \bar{B}t}{1 \mp it}.\]
Thus, for all $t \in \R^1$ the equality must be true
\[\frac{1\mp Bt}{1\mp it} = \frac{1\pm \bar{B}t}{1 \mp it}.\]
It follows that $B$ must be a purely imaginary number, i.e.
$B=\pm ia$, $a \in \R^1$. However,
\[ \frac{1 \pm iat}{1 \pm it} = a+\frac{1-a}{1 \pm it}.\]
Now we see that on the right side of the last equality there is a Fourier transform of a mixture of a degenerate distribution at zero with weight $a$ and an exponential distribution on the positive (+ sign) or negative (- sign) half-axis with weight $1-a$. This mixture must be a probability distribution. Therefore $0 \leq a \leq 1$.

{\bf II) Let us now consider the case (\ref{eq6})}. Since in this case the degree of the polynomial $P(t)$ is not higher than 2, and $f(0)=1$ then
\[f(t)= \frac{1+iat+bt^2}{1+t^2}=b+\frac{1-b+iat}{1+t^2}. \]
The condition $f(-t)=\overline{f(t)}$ shows that $a,b \in \R^1$.
It is easy to see that $f(t)$ can be represented as a mixture of Fourier transforms: 1) a degenerate distribution at zero with weight $b$; 2) exponential distribution on the negative semi-axis with weight $\frac{1-a-b}{2}$; 3) exponential distribution on the positive semi-axis with weight $\frac{1+a-b}{2}$. The sum of these weights is equal to 1. Due to the probabilistic meaning, these weights must be non-negative.
The first part of the Theorem has been proven.

However, it is easy to verify that the various coefficients of the power expansions of the functions (\ref{eq1}) and (\ref{eq2}) form a finite set, which completes the proof of the Theorem.
\end{proof}

As an addition to the proven Theorem, we note that for the function (\ref{eq2})
\[f(t)=\frac{1+iat+bt^2}{1+t^2}\]
$\go{M}=\{1,a,1-b\}$ is true. In more detail, $\mu_0=1$, $\mu_{2k-1}/(2k-1)! =a$ and $\mu_{2k}/2k!=1-b$, $k=1,2,\ldots$. Since the function (\ref{eq1}) is a special case of (\ref{eq2}), the set $\go{M}$ cannot consist of more than three elements.

So, as a Corollary we get the main result.

\begin{co}\label{co1} coincides with {\bf Theorem \ref{th0}}.
Let a non-degenerate distribution have finite moments $\mu_k$ of all orders $k=0,1,2,\ldots$. Then the sequence $\{\mu_k/k!, \; k=0,1,2,\ldots\}$ either contains infinitely many different terms or at most three. In the latter case, this sequence has the form $\{1,a,1-b,a,1-b,a,1-b, \ldots\}$ and corresponds to a distribution with a characteristic function (\ref{eq2}).
\end{co}
\begin{proof}
If the sequence $\{\mu_k/k!, \; k=0,1,2,\ldots\}$ contains only a finite set of distinct elements, then the corresponding characteristic function is analytic in some circle centered at zero. And, therefore, the required follows from Theorem \ref{th1} and its addition.
\end{proof}
\section{Property of cumulants}\label{sec3}
\setcounter{equation}{0}

In this section we will consider instead of the function $f(t)$ its logarithm
$g(t)=\log f(t)=\sum_{k=0}^{\infty}i^k \varkappa_k t^k/k!$. We assume a set $\go K$ of different fraction values $\{\varkappa_k/k!,\; k=0,1,2,\ldots\}$ of course. The following result is true.

\begin{thm}\label{th2}
Let a non-degenerate random variable have a characteristic function $f(t)$, analytic in the disk $|t|<r$, $r>0$. Let us assume that the set $\go K$ is finite for it. Then either
\begin{equation}\label{eqc1}
g(t)=-\sigma^2 t^2+iat, \quad \sigma \geq 0,\; a\in\R^1,
\end{equation}
or
\begin{equation}\label{eqc2}
g(t)=\frac{P(t)}{1\pm it},
\end{equation}
where $P(t)$ is a polynomial, or, finally,
\begin{equation}\label{eqc3}
g(t)=\frac{P(t)}{1+t^2},
\end{equation}
where, again, $P(t)$ is some polynomial.
\end{thm}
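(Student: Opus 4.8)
The plan is to mirror the proof of Theorem~\ref{th1}, working with $g(t)=\log f(t)$ in place of $f(t)$, and to compensate for the fact that $g$ is only a logarithm by systematically exploiting the relation $e^{g}=f$. First I would observe that the finiteness of $\go K$ is equivalent to the finiteness of the set of distinct coefficients of the power series of $g$: the $k$th coefficient equals $i^k\varkappa_k/k!$, and since $i^k$ cycles through the four values $\{1,i,-1,-i\}$, a finite set $\{\varkappa_k/k!\}$ produces a finite coefficient set, and conversely. Because $f(0)=1$ and $f$ is analytic near $0$, the function $g=\log f$ is analytic in a neighbourhood of the origin, so (after normalising $r=1$) its power series has radius of convergence $1$ whenever infinitely many coefficients are nonzero; if only finitely many are nonzero, $g$ is already a polynomial and we pass directly to the final step.

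Next I would apply G.~Szeg\"o's theorem to $g$: either $|t|=1$ is a natural boundary for $g$, or $g(t)=Q(t)/(1-t^m)$ for some polynomial $Q$ and integer $m$, with all poles at $m$th roots of unity. To exclude the first alternative, note that $f$ extends analytically to the strip $|\tt{Im}\,t|<1$, and that the identity $e^{g}=f$, valid near $0$, persists throughout $|t|<1$ by the identity theorem (as $e^{g}-f$ is analytic there and vanishes near $0$). The zeros of $f$ in the strip are isolated, so there is an open arc $A\subset\{|t|=1\}$ away from $\pm i$ (the only points of the circle lying on the boundary $|\tt{Im}\,t|=1$) on which $f$ is analytic and nonzero. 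On a simply connected neighbourhood of $A$ the relation $e^{g}=f$ forces $g$ to be a branch of $\log f$, which extends since $f\neq 0$ there; hence $g$ continues across $A$ and $|t|=1$ is not a natural boundary. Thus $g$ is rational with poles only at roots of unity.

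The decisive step is then to locate these poles. Since $e^{g}=f$ and $f$ is analytic in the open strip $|\tt{Im}\,t|<1$, the function $g$ can have no pole inside that strip: at a pole, $e^{g}$ would exhibit an essential singularity, contradicting the analyticity of $f$. Every $m$th root of unity except $\pm i$ satisfies $|\tt{Im}\,t|<1$ and so lies in the open strip; therefore, in lowest terms, the denominator of $g$ can only contain the factors $1\pm it$, and since the roots of $1-t^m$ are simple these poles are simple. This leaves exactly three possibilities: $g$ is a polynomial, $g(t)=P(t)/(1\pm it)$ (form (\ref{eqc2})), or $g(t)=P(t)/(1+t^2)$ (form (\ref{eqc3})).

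It remains to treat the polynomial case. Here $f=e^{P}$ is an analytic characteristic function with polynomial logarithm, so by Marcinkiewicz's theorem (see \cite{Lin}) one has $\deg P\le 2$; using $g(0)=\log f(0)=0$ together with $g(-t)=\overline{g(t)}$ for real $t$ (which forces real even coefficients and purely imaginary odd ones) and $\tt{Re}\,g(t)=\log|f(t)|\le 0$, one obtains $g(t)=iat-\sigma^2t^2$ with $a\in\R^1$ and $\sigma\ge 0$, i.e.\ form (\ref{eqc1}). The main obstacle is the pair of steps involving the natural boundary and the location of the poles: unlike $f$, the logarithm $g$ does not itself extend to the full strip---it acquires logarithmic branch points at the zeros of $f$---so both the exclusion of the natural boundary and the prohibition of interior poles must be routed through the entire relation $e^{g}=f$ rather than through $g$ directly.
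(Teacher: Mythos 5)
Your proof is correct and follows essentially the same route as the paper's: Szeg\"o's theorem applied to the expansion of $g$, exclusion of the natural-boundary alternative and of poles away from $\pm i$ by analytic continuation, and Marcinkiewicz's theorem in the polynomial case. You in fact supply the justifications that the paper compresses into the phrase ``in view of the analytical prolongation of $f(t)$ and, therefore, $g(t)$'' --- namely the continuation of $g$ across an arc of the unit circle via the identity $e^{g}=f$ despite possible zeros of $f$ in the strip, and the essential-singularity argument showing $g$ cannot have a pole where $f$ is analytic --- so your write-up is, if anything, more complete than the original.
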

Note that (\ref{eqc2}) is a special case of (\ref{eqc3}).
\begin{proof}
The finiteness of the set $\go K$ means that the set of different coefficients of the power series expansion of the function $g(t)$ is finite. Szego's theorem shows that
\[ g(t) =\frac{P(t)}{1-t^m} \]
in view of the analytical prolongation of $f(t)$ and, therefore, $g(t)$ from the unit disk. As in the proof of Theorem \ref{th1}, we see that only the cases $g(t)=P(t)$, (\ref{eqc2}) and (\ref{eqc3}) are possible. However, if $g(t)=P(t)$, then the well-known Marcinkiewicz theorem shows that the degree of the polynomial is not higher than 2, and the characteristic function is normal, i.e. true (\ref{eqc1}). The theorem has been proven.
\end{proof}
Since the function $g(t)$ must not be bounded, we cannot find a bound on the degree of the polynomial $P(t)$. However, we immediately note that $1/(1+t^2)$ is a characteristic function, and therefore $1-1/(1+t^2)$ is the logarithm of some characteristic function. Multiplying it by the characteristic function of the standard normal law we see that
\[1-1/(1+t^2)-t^2/2= \frac{1+t^2-t^4}{2(1+t^2)} \]
again the logarithm of some characteristic function for which the set $\go K$ is finite. Thus, the fourth degree of the polynomial $P(t)$ in Theorem \ref{th2} is possible.

!!! Добавить о безграничной делимости и о ее отсутствии!!!

\begin{re}\label{re1}
Note that in (\ref{eqc3}) the degree of the polynomial $P(t)$ can be greater than four.
\end{re}
\begin{proof}
To prove it, it is enough to construct an appropriate example. We will do this in two stages. First, we determine the distribution on the non-negative semi-axis using its Laplace transform, and then move on to the required characteristic function of another random variable.

Let us define the function $h(s)$ by the equality
\begin{equation}\label{eqc4}
h(s) = \exp\{ -1 + \frac{1}{1+s} -5s -s^2/10\}, \quad s \ge 0.
\end{equation}
Using induction we can prove that for the $k$th derivative of the function $h(s)$
\begin{equation}\label{eqc5}
h^{(k)}(s)= h(s)(-1)^k \frac{1}{(1+s)^{2k}}P_{3k}(s),\quad k=1 ,2, \ldots,
\end{equation}
where $P_{3k}(s)$ is a polynomial in $s$ of degree no higher than $3k$ with non-negative coefficients. Thus, the function $h(s)$ is completely monotonic and, according to the theorem of S.N. Bernstein, is the Laplace transform of some finite measure on a non-negative semi-axis. Since $h(0)=1$, this is a probability measure. Hence,
\[h(s) =\int_{0}^{\infty}\exp\{-s x\}d\mathcal{A}(x). \]
Let us now define $f(t)=h(t^2)$,
\[ f(t) = \int_{0}^{\infty}\exp\{-t^2 x\}d\mathcal{A}(x). \]
Since $\exp\{-t^2 x\}$ for $x>0$ is a characteristic function of a normal law with zero mean, then $f(t)$ is a mixture of characteristic functions and, therefore, is itself a characteristic function of some probabilistic distributions. However, the power series expansion of its logarithm has only a finite number of different coefficients. Moreover, its logarithm
\[g(t)=
\frac{-6t^2-5.1t^4-0.1t^6}{1+t^2}\]
has a polynomial of the sixth degree in the numerator, and $1+t^2$ in the denominator.
\end{proof}

\end{document}